\newtheorem{theorem}{Theorem}[section]
\newtheorem{proposition}[theorem]{Proposition}
\newtheorem{lemma}[theorem]{Lemma}
\newtheorem{corollary}[theorem]{Corollary}
\theoremstyle{remark}
\newtheorem{remark}[theorem]{Remark}
\newtheorem{example}[theorem]{Example}
\title{Tangle decompositions of alternating link complements}
\author{Joel Hass, Abigail Thompson, Anastasiia Tsvietkova}
\date{}
\subjclass[2010]{}
\begin{document}

 \footnotesize
 \begin{abstract} {Decomposing knots and links into tangles is a useful technique for understanding their properties.   The notion of prime tangles was introduced by Kirby and Lickorish in \cite{KLic};  Lickorish proved \cite{Lickorish} that by summing prime tangles one obtains a prime link.  In a similar spirit, summing two prime alternating tangles will produce a prime alternating link, if summed correctly with respect to the alternating property. Given a prime alternating link, we seek to understand whether it can be decomposed into two prime tangles each of which is alternating.     We refine results of Menasco and Thistlethwaite to show that if such a decomposition exists either it is visible in an alternating link diagram or the link is of a particular form, which we call a pseudo-Montesinos link.}
 \end{abstract}

\maketitle
\normalsize

\section{Overview}

We review some definitions and give an outline of the paper.

Let $L$ be a non-split prime alternating link in $S^3$.  A properly imbedded surface in the complement of $L$ is \textit{essential}  if it is incompressible, boundary incompressible and non-boundary parallel  in $S^3-L$.

{A \textit{Conway sphere} for $L$ is an essential 4-punctured sphere properly imbedded in the complement of $L$ with meridianal boundary components.  We say that a Conway sphere splits $L$ into two \textit{2-tangles}, i.e., two 3-balls each of which contains two strands of $L$. A 3-ball may also additionally contain components of $L$ disjoint from $F$. Since $L$ is prime by hypotheses, the tangles on each side are \textit{prime}.  A 2-tangle in which the two strands are boundary parallel is called a  \textit{rational} tangle.

The notion of a prime tangle was suggested by Kirby and Lickorish in \cite{KLic}. Lickorish
shows in \cite{Lickorish} that by summing prime tangles one obtains a prime link. Similarly, summing
two prime alternating tangles produces a prime alternating link, if summed correctly with
respect to the alternating property. Here, we look at the converse: given a prime alternating
link diagram, can one determine whether it is a sum of two prime alternating tangles?

Menasco proved that a Conway sphere is realized in a reduced alternating diagram as either visible (represented by a PPPP curve in standard position) or hidden (two PSPS curves) in \cite{Menasco1984}. He
also proved that two PSPS curves in a prime alternating diagram represent a 4-punctured
sphere that is essential \cite{Menasco1985}. It is known how to determine whether a given
PPPP curve (visible case) represents a 4-punctured sphere that is essential: see Menasco and Thislethwaite \cite{MT93}, and Thislethwaite \cite{Thistlethwaite}. All these facts together give a
purely diagrammatic algorithm to determine whether the link has a decomposition into two
prime tangles, though possibly not alternating.

We revisit the case of a ``hidden" Conway sphere.  We prove that such a sphere forces the presence of another visible Conway sphere right next to it, in the same diagram, unless the link is a pseudo-Montesinos link (Proposition 3.2). Pseudo-Montesinos links will be defined in Section 3 and are a subset of arborescent links \cite{BonSieb}. This was not known before: rather, Thistlethwaite noted that a visible Conway sphere is always visible in an alternating diagram (\textit{i.e.} if visible in one alternating diagram, then also visible in other alternating diagrams of the same link), and therefore a hidden one is always hidden \cite{Thistlethwaite}. We also prove that a pseudo-Montesinos link has no visible Conway spheres in any alternating diagram (Proposition 3.3 with corollaries). Our two results together yield that a decomposition into two prime alternating tangles can always be detected by looking at an alternating diagram (Theorem 3.1). In the final section we also show that for a closed braid $L$, detecting prime alternating tangle decompositions takes an even easier form. Indeed, once the decomposing sphere is essential, it must be positioned in a special way with respect to $D$ (Proposition 4.1).

Our proofs mainly use two techniques. The first one is inherited from Menasco's work: we use standard position, which helps to translate topology of a surface into combinatorics of curves and diagrams. The second component is a careful topological analysis of certain isotopies and strong isotopies of surfaces embedded in 3-manifolds.

  The visibility of a prime alternating tangle decomposition (and of the genus-2 surfaces that yield an  essential 4-punctured sphere after meridianal compressions) aligns with well-known results of Menasco, who noted that some of the basic topological properties of alternating link complements can be seen directly in reduced alternating link diagrams \cite{Menasco1984}. Among them is the property of a link being non-split (and respectively the presence of an essential genus-0 surface in the link complement), and the property of being prime (and the presence of an essential genus-1 surface).

In Section \ref{IntersectionPattern}, we recall results of Menasco and Thistlethwaite on surfaces in alternating link complements.   In Section \ref{Decompositions} we state and prove our main theorem.   In the final Section \ref{Braids} we closely examine tangle decompositions of alternating braids.

  \section{Conway spheres and standard position}\label{IntersectionPattern}

  In this preliminary section, we recall Menasco's techniques and a key lemma, as well some of the results of Thistlethwaite on rational tangles.

Let $D$ be a reduced alternating diagram of the link $L$.   Let  $Q$ be the projection sphere where $D$ lies except for perturbations at crossings. We review the notion of a surface in  standard position  (see Section 2 of \cite{Menasco1984}).

The link $L$ lies on a union of two spheres in $S^3$, $S^2_+$ and $S^2_-$, which agree with $Q$ except in a bubble around each crossing. At the bubbles, $S^2_+$ and $S^2_-$ go over the top and bottom hemispheres respectively. We will denote by $B_+$ and $B_-$ the parts of $S^3$ lying above and below $Q$ respectively.

Let $F$ be an essential surface properly embedded in the complement of $L$ such that every boundary component of $F$ is meridianal. If $F$ is closed, we meridianally compress it until no further meridianal compression is possible. Then $F\cap S^2_\pm$ consists of simple closed curves bounding disks of $B_{\pm}\cap F$. Following Menasco's technique, we encode such a curve of intersection $C$ by a word consisting of the letters $P$ and $S$. {The letter $P$ means $C$ intersects a strand of a link not at a crossing, \textit{i.e.} $F$ has a meridianal boundary component there. The letter $S$ means $C$ intersects a crossing, \textit{i.e.} $F$ passes between two strands of a crossing and is shaped as a saddle there. Fig.\ref{PSPS} depicts an example of a $PSPS$ curve, projected on $Q$ from $F\cap S^2_+$ (a fragment of a link diagram is pictured in grey). There are multiple ways to put a surface $F$ in standard position. If it is done so that the total number of $S$'s, $P$'s and the curves of $F\cap S^2_\pm$ is minimized, then $F$ is in \textit{complexity minimizing standard position}.

\begin{figure}[ht]
\centering
\begin{subfigure}[b]{0.47 \textwidth}
\centering
\includegraphics[scale=0.6]{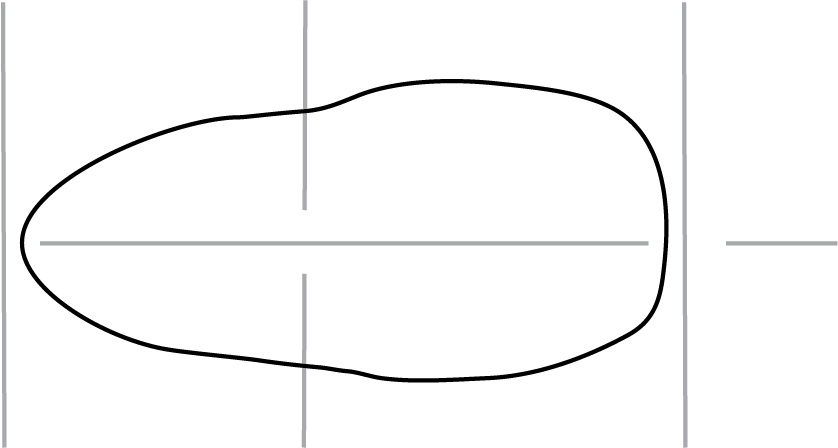}
\caption{A $PSPS$ curve.}\label{PSPS}
\end{subfigure}
\begin{subfigure}[b]{0.47 \textwidth}
\centering
\includegraphics[scale=0.6]{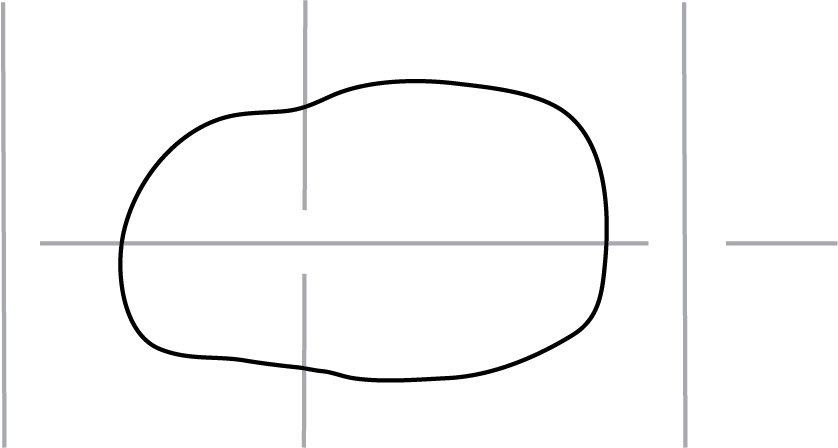}
\caption{A modified $PSPS$ curve.}
\label{PSPSmodified}
\end{subfigure}
\caption{}
\end{figure}

The following observations follow from the techniques of Menasco (\cite{Menasco1984}). For details, see \cite{HTT}.

A segment of $D$ from a crossing to an adjacent crossing will be called \textit{a edge}.

\begin{lemma}\label{Lemma} Suppose $F$ is a closed essential 4-punctured sphere in the complement $S^3-L$ of a prime alternating non-split link $L$, i.e., $F$ is a Conway sphere for $L$.    Then $F$ can be placed in standard position relative to $S^2_+ \cup S^2_-$ so that
\begin{enumerate}
\item\label{GenusTwoWords} $F$ intersects $S^2_+$ in either a single $PPPP$ curve or in  two $PSPS$ curves.
\item\label{SaddleArc} No curve passes through a saddle and then crosses an edge of $D$ adjacent to the saddle.

 \item\label{ArcTwice} No curve crosses an edge of $D$ twice consecutively.

    \end{enumerate}
\end{lemma}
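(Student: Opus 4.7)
The plan is to put $F$ in complexity minimizing standard position as defined just before the lemma, and then obtain the three properties by ruling out each forbidden configuration via an explicit complexity-reducing isotopy, in the style of Menasco's original argument in \cite{Menasco1984}.

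For part (\ref{GenusTwoWords}), I first want to count the total number of $P$'s appearing among all curves of $F\cap S^2_+$. After a small preliminary isotopy moving the punctures of $F$ off the crossings of $D$, each of the four meridional boundary circles of $F$ lies on a strand of $L$ in $Q\setminus\{\text{crossings}\}$, so it contributes exactly one $P$ to some curve of $F\cap S^2_+$ (and one $P$ on the $S^2_-$ side). Hence the words recording $F\cap S^2_+$ contain a total of exactly four $P$'s. I then invoke Menasco's constraints for essential surfaces in complexity minimizing standard position: no curve consists only of $S$'s (such a curve would bound a disk on $Q$ disjoint from $L$, giving a compression or a complexity-reducing disk swap), no curve has length less than $4$ (a $PP$ curve bounds a meridional annulus on $F$ that can be removed, and shorter words are even more trivially reducible), and inserting extra $S$'s into a curve strictly increases complexity. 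The only ways to split four $P$'s among admissible words are therefore one curve $PPPP$ or two curves each $PSPS$.

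For parts (\ref{SaddleArc}) and (\ref{ArcTwice}), both items are standard \emph{bigon reduction} arguments. Suppose a curve $C\subset F\cap S^2_+$ contains a subarc whose consecutive labels are an $S$ at a crossing $c$ followed by a $P$ on an edge $e$ of $D$ adjacent to $c$. Projected to $Q$, this subarc, together with a short segment of the saddle at $c$ and a short segment of $e$, bounds a bigon $\beta\subset S^2_+$ whose interior is disjoint from $D$. Lifting $\beta$ to $F$ and combining with the saddle disk in the bubble at $c$ produces a disk in $S^3$ along which $F$ can be isotoped across the bubble at $c$; this removes one $S$ and one $P$ and strictly decreases the complexity triple, contradicting minimality. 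The argument for (\ref{ArcTwice}) is similar but simpler: two consecutive $P$'s on the same edge $e$ bound a bigon in $S^2_+$ one side of which lies on $e$, and the obvious isotopy of $F$ across this bigon removes two $P$'s.

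The main obstacle I expect is the case analysis in (\ref{SaddleArc}): one must check all possible positions of the $P$ relative to the saddle (which of the four edges at $c$ contains the $P$, and on which side of the saddle the subarc of $C$ exits), and verify in every case that the bigon $\beta$ can be taken innermost in $S^2_+$ so that the complexity-reducing isotopy really is performed without creating new intersections with $S^2_-$ or with other sheets of $F$. The hypotheses that $D$ is reduced and alternating, and that $L$ is prime and non-split, enter precisely to exclude the small exceptional configurations that would block this innermost choice, exactly as in \cite{Menasco1984} and the companion reference \cite{HTT}.
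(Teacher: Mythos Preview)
The paper does not actually supply a proof of this lemma; it simply records that the three items ``follow from the techniques of Menasco \cite{Menasco1984}'' and defers the details to \cite{HTT}.  Your approach---put $F$ in complexity-minimizing standard position and then eliminate each forbidden configuration by an explicit Menasco-style reduction---is exactly the method being cited, so you are on the same route as the paper.  Your sketches of parts~(\ref{SaddleArc}) and~(\ref{ArcTwice}) are the standard innermost-bigon reductions and are fine.

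There is, however, a genuine gap in your argument for part~(\ref{GenusTwoWords}).  The three facts you invoke---exactly four $P$'s in total, no curve consisting only of $S$'s, and no curve of length less than~$4$---do \emph{not} by themselves force the $PPPP$/two-$PSPS$ dichotomy.  For instance, nothing you have written excludes a single curve with word $PSPPSP$ or $PSPSPSPS$, or two curves with words $PSSS$ and $PPPS$.  Your clause ``inserting extra $S$'s into a curve strictly increases complexity'' is literally true of the complexity function, but it does not show that such $S$'s can be \emph{removed} by an isotopy of $F$; indeed in the hidden case the two saddles are forced and cannot be isotoped away.  What is missing is the key consequence of the alternating hypothesis: on any curve of $F\cap S^2_\pm$ no two $S$'s are consecutive, so $\#S\le\#P$ on every curve.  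Combining this with the Euler-characteristic count $d_++d_--s=\chi(\widehat F)=2$ for the capped-off sphere, and with the elimination (via primeness and non-splitness) of curves having $\#P\le 1$ and of $PP$ curves, one is left with a short case analysis on $(d_+,d_-,s)$ that yields only $(1,1,0)$ and $(2,2,2)$, i.e.\ one $PPPP$ curve or two $PSPS$ curves.  This is how the argument is carried out in \cite{Menasco1984} and \cite{HTT}.
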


\begin{remark}\label{RationalTangle}

Menasco's techniques found application in the work of Thistlethwaite on rational tangles \cite{Thistlethwaite}, in which he describes precisely what an alternating diagram of a rational tangle looks like, as follows:

Start with a diagram of a 2-string tangle that has no crossings. Then surround this diagram by annuli,
each annulus containing four arcs of the link diagram joining distinct boundary components of the annulus, and connected to the arcs in the neighboring annuli or the described 2-string tangle. Each annulus contains  a single crossing between two of the arcs. See the comments after Corollary 3.2 in \cite{Thistlethwaite}, as well as Fig.2 in \cite{LackenbyTunnel} for an illustration. In particular, one can determine whether an alternating tangle diagram represents a rational tangle just by looking at the diagram.
\end{remark}

\section{Decomposition into two prime alternating tangles}\label{Decompositions}

A \textit{Montesinos link} is a link obtained by taking a cyclic sum of a finite number of rational 2-tangles (see top of Fig.\ref{pseudo} for an example; inside each circle insert a diagram of a rational tangle). We will often refer to these four tangles as \textit{sub-tangles}, since together they may form larger tangles that we consider. Given a Montesinos link with four rational sub-tangles $T_i, i=1,2,3,4$, we construct a \textit{pseudo-Montesinos link} as follows: delete four strands, one each connecting $T_1$ to $T_2$,  $T_2$ to $T_3$, $T_3$ to $T_4$, and $T_4$ to $T_1$.  Then, following the pattern shown in the bottom of Fig.\ref{pseudo}, replace these strands with four strands, two between $T_1$ and $T_3$ and two between $T_2$ and $T_4$. If the resulting diagram is reduced and alternating and each rational sub-tangle has at least one crossing we say that it is a  \textit{standard diagram of an alternating pseudo-Montesinos} link.   Fig.\ref{Link2} is an example of standard diagram of an alternating pseudo-Montesinos link.

\begin{figure}[h]
    \centering
    \includegraphics[width=0.7\textwidth]{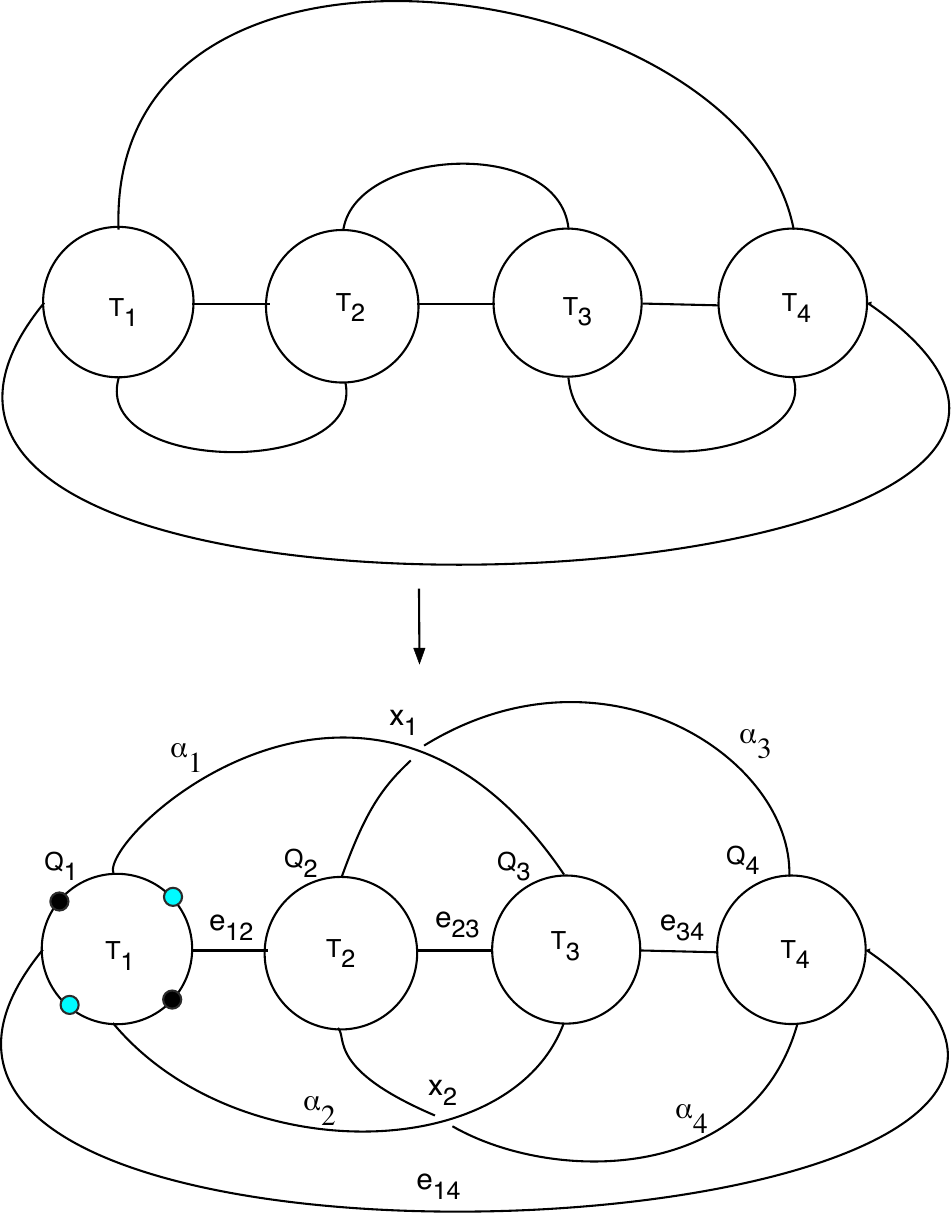}
    \caption{Constructing a pseudo-Montesinos link.}
    \label{pseudo}
\end{figure}

In the terminology of Thistlethwaite (\cite{Thistlethwaite}), a \textit{visible} 4-punctured sphere in an alternating diagram $D$ is one that appears in the plane of the diagram (after isotopy to standard position) represented by a $PPPP$ curve. A \textit{hidden} Conway sphere is one that is represented by two $PSPS$ curves. We extend this to call any 2-tangle $T$ in $L$ \textit{visible} if a $PPPP$ curve $c$ intersects four arcs of $D$ such that all of $T$ lies on one side of $c$, and the complement of $T$ (denote it by $T^c$) lies entirely on the other side of $c$.

Our main theorem is the following:

\begin{theorem}\label{Theorem}
Suppose $L$ is a prime alternating non-split link, $D$ is a reduced alternating diagram for $L$, and there is an essential Conway sphere embedded in $S^3-L$. Then a prime tangle decomposition of $L$ is visible in $D$ if and only if $D$ is not a standard  diagram of an alternating pseudo-Montesinos link.    Further, if $D$ is a standard  diagram of an alternating pseudo-Montesinos link, then no prime tangle decomposition for $L$ is visible in any reduced alternating diagram for $L$.
\end{theorem}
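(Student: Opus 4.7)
The plan is to deduce Theorem~\ref{Theorem} from Propositions 3.2 and 3.3 (with corollaries), together with the dichotomy provided by Lemma~\ref{Lemma}. The key input is that Lemma~\ref{Lemma} forces any essential Conway sphere $F$, once placed in complexity-minimizing standard position, to intersect $S^2_+$ either in a single visible $PPPP$ curve or in two hidden $PSPS$ curves. Propositions 3.2 and 3.3 then determine which of these configurations is compatible with $D$.

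For the ``if'' direction of the biconditional, I would assume $D$ is not a standard alternating pseudo-Montesinos diagram and fix any essential Conway sphere $F$ in $S^3-L$. Place $F$ in standard position via Lemma~\ref{Lemma}. In the $PPPP$ case, $F$ is already visible and, since $L$ is prime, the two sides are automatically prime tangles, yielding the desired visible decomposition. In the two-$PSPS$ case, invoke Proposition 3.2: because $D$ is not pseudo-Montesinos, the hidden Conway sphere is guaranteed to force the presence of a visible Conway sphere within the same diagram $D$, which again gives a visible prime tangle decomposition.

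For the ``only if'' direction and the ``Further'' statement, I would assume $D$ is a standard alternating pseudo-Montesinos diagram, so that $L$ is a pseudo-Montesinos link. Proposition 3.3 together with its corollaries asserts that no reduced alternating diagram of a pseudo-Montesinos link admits a visible Conway sphere. In particular, no prime tangle decomposition is visible in $D$, nor in any other reduced alternating diagram for $L$. This simultaneously establishes the contrapositive of the ``only if'' half of the biconditional and the closing assertion of the theorem.

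The reduction itself is almost formal; the genuine work lives in the two propositions it invokes, and that is where I expect the main obstacle to be. Proposition 3.2 requires a delicate Menasco-style combinatorial analysis of the two $PSPS$ curves: one must either isotope them to produce a nearby $PPPP$ curve or extract enough rigidity from parts (2) and (3) of Lemma~\ref{Lemma} to recognise four rational sub-tangles arranged in the characteristic pseudo-Montesinos pattern. Proposition 3.3 presents a complementary difficulty, since one has to rule out visible Conway spheres across \emph{all} reduced alternating diagrams of a pseudo-Montesinos link; this requires both a preservation argument for the pseudo-Montesinos form under alternating flypes and a direct verification that no $PPPP$ curve on such a diagram bounds an essential $4$-punctured sphere.
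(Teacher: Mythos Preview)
Your reduction is correct and matches the paper's own argument, which simply states that Theorem~\ref{Theorem} follows immediately from Proposition~\ref{PropOne}, Proposition~\ref{PropTwo}, and Corollary~\ref{CorTwo}. Your handling of the ``if'' direction is slightly redundant---the $PPPP$/$PSPS$ case split you sketch is already absorbed into Proposition~\ref{PropOne}---but the logic is sound and your assessment of where the substantive work lies (Propositions~\ref{PropOne} and~\ref{PropTwo}) is accurate.
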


The rest of the section is devoted to the proof of Theorem \ref{Theorem} via a sequence of propositions.

\begin{proposition}\label{PropOne} Suppose $L$ is a prime alternating non-split link, $D$ is a reduced alternating diagram for $L$, and there is an essential Conway sphere embedded in $S^3-L$. Then either a prime tangle decomposition of $L$ is visible in $D$, or $D$ is a standard diagram of an alternating pseudo-Montesinos link.
\end{proposition}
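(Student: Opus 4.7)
The plan is to use Lemma \ref{Lemma} to split into two cases according to whether $F\cap S^2_+$ consists of a single $PPPP$ curve or of two $PSPS$ curves. In the $PPPP$ case the conclusion is immediate: after placing the essential Conway sphere $F$ in complexity-minimizing standard position, the single $PPPP$ curve $c$ bounds the two prime $2$-tangles of the Conway decomposition on either side in $S^2_+$, so by definition the decomposition is visible in $D$ and the first alternative of the proposition holds.

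The substance lies in the two-$PSPS$ case. Let $\gamma_1,\gamma_2$ be the two curves in $S^2_+$. I would first analyze the gluing of the top disks $F\cap B_+$ to the bottom disks $F\cap B_-$ through the saddles, combined with $\chi(F)=-2$, to show that $\gamma_1$ and $\gamma_2$ must share both of their saddle crossings; call these $c_1$ and $c_2$. Then $\gamma_1\cup\gamma_2$ divides $S^2_+$ into four regions, and Lemma \ref{Lemma} parts (2) and (3) (ruling out bad local behaviour of the curves at saddles and along adjacent edges) force the portion of $D$ in each region to be a $2$-sub-tangle $T_i$ with exactly four strand endpoints on its boundary: two at meridianal punctures on the arcs of $\gamma_1,\gamma_2$ and two at the corners $c_1,c_2$. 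The four sub-tangles $T_1,T_2,T_3,T_4$ are arranged cyclically around the two shared saddles, and $F$ separates $T_1\cup T_3$ from $T_2\cup T_4$.

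Next I would try to upgrade $F$ to a visible Conway sphere by constructing a $PPPP$ curve $\delta\subset S^2_+$ that passes through the same four meridianal punctures but routes around $c_1,c_2$ instead of through them as saddles. There are two local smoothing choices at each of $c_1$ and $c_2$, and a short combinatorial check shows that the two ``opposite'' combinations merge $\gamma_1\cup\gamma_2$ into a single $PPPP$ curve, yielding two candidate curves $\delta_1,\delta_2$. If either $\delta_j$ can be drawn without picking up any additional $P$ or $S$ intersection with $D$, it represents a visible Conway sphere (essentiality being inherited from $F$ by standard incompressibility arguments), and the prime tangle decomposition is visible in $D$.

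The main obstacle, and the technical heart of the proof, is showing that the failure of both $\delta_1$ and $\delta_2$ forces $D$ to be a standard diagram of an alternating pseudo-Montesinos link. I would split this into two subparts. First, using Thistlethwaite's characterization of alternating rational tangle diagrams (Remark \ref{RationalTangle}) together with the failure of rerouting, one shows that each $T_i$ must be rational with at least one crossing: were some $T_i$ non-rational, it would contain a smaller essential $4$-punctured sphere or an essential arc providing enough slack to reroute one of the $\delta_j$ past its obstruction. Second, I would verify that the only possible connection pattern of the four rational sub-tangles through the shared saddles $c_1,c_2$ is exactly the template of Fig.\ref{pseudo}: the two saddles correspond precisely to the two strand-swaps of the pseudo-Montesinos construction, with $T_1\cup T_3$ and $T_2\cup T_4$ realized as the opposite pairs. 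These two ingredients together identify $D$ as a standard diagram of an alternating pseudo-Montesinos link, completing the dichotomy.
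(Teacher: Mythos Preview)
Your setup through the identification of the four sub-tangles $T_1,\dots,T_4$ matches the paper, but the argument diverges afterward and contains two genuine gaps.

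First, the claim that a smoothed curve $\delta_j$ (hitting the same four meridianal punctures as $F$ but avoiding the saddles) yields a visible Conway sphere with ``essentiality inherited from $F$'' is not justified. The planar sphere $W_{\delta_j}$ obtained by capping $\delta_j$ is \emph{not} isotopic to $F$: the hidden sphere $F$ separates the opposite pair $T_1\cup T_3$ from $T_2\cup T_4$, whereas a planar $PPPP$ sphere through those same four punctures necessarily separates an adjacent pair. Incompressibility of $F$ therefore says nothing about $W_{\delta_j}$; you would still need an independent primality argument for both sides of $\delta_j$.

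Second, the fallback---that if neither $\delta_j$ can be rerouted then every $T_i$ must be rational, because a non-rational $T_i$ would ``contain a smaller essential $4$-punctured sphere or an essential arc providing enough slack to reroute''---is not a valid argument. A prime $T_i$ need not contain any smaller Conway sphere, and its internal structure gives no mechanism for pushing $\delta_j$ past the obstructing strands that lie \emph{outside} all the $T_i$.

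The paper's route avoids both issues and is more direct. It never tries to realize $F$ visibly. Instead it observes that the boundary $c$ of each sub-tangle $T_i$ is already a visible $PPPP$ curve (with two punctures shared with $F$ and two new ones near the saddles). If every $T_i$ is rational---checkable from the diagram by Remark~\ref{RationalTangle}---then $D$ is pseudo-Montesinos by definition. Otherwise some $T$ is prime, and the paper proves its complement $T^c$ is also prime: since the sphere $W$ associated to $c$ can be taken disjoint from $Z=F$, one entire prime side $Q$ of $Z$ lies inside $T^c$; if $T^c$ were rational its two arcs would be parallel to $W$ through disks $E_1,E_2$, and an outermost arc of $Z\cap(E_1\cup E_2)$ would double to a compressing disk for $Z$, contradicting essentiality. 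Thus the visible prime decomposition is along $\partial T$, not along a smoothing of $\gamma_1\cup\gamma_2$.
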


\begin{proof}
Let $Z$ be an embedded 4-punctured sphere splitting $D$ into two prime 2-tangles $R$ and $Q$. Choose one of the two spheres $S^2_-$ and $S^2_+$, say $S^2_+$, and consider its intersections with $Z$. By Lemma \ref{GenusTwoWords}, we can place $Z$ in standard position so that either $Z$ intersects $S^2_+$ in a single $PPPP$ curve or in exactly two $PSPS$ curves. 

If $Z$ intersects $S^2_+$ in a single $PPPP$, then the tangle decomposition is visible in the diagram and we are done.

Assume $Z$ intersects  $S^2_+$ in two $PSPS$ curves. The two curves naturally divide the diagram into four 2-string ``sub-tangles"  (see Fig.\ref{Link1}) along simple closed curves that intersect the link in four points.  One of these sub-tangles is labeled $T$ in the figure, with the simple closed curve $c$ (depicted by the dotted line) on its boundary. Note that all four sub-tangles are visible; by standard position and Lemma \ref{Lemma} (\ref{SaddleArc}),  note that each sub-tangle contains at least one crossing.

\begin{figure}[ht]
\centering
\begin{subfigure}[b]{0.46 \textwidth}
\centering
\includegraphics[scale=0.7]{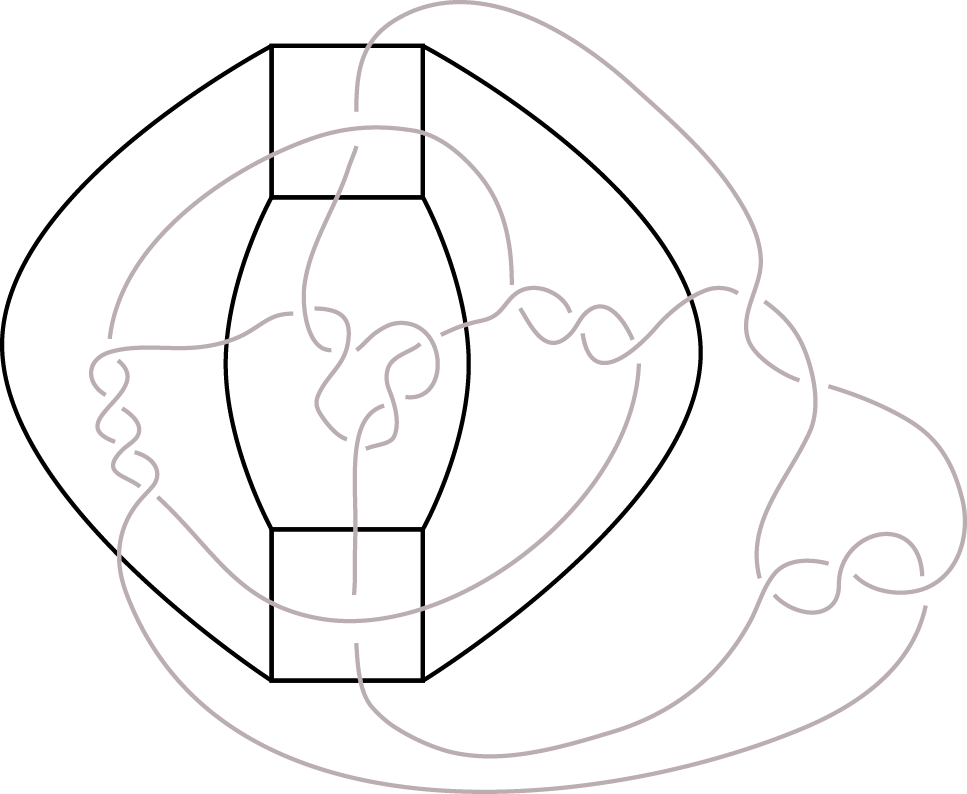}
\caption{}
\label{Link1}
\end{subfigure}
\begin{subfigure}[b]{0.46 \textwidth}
\centering
\includegraphics[scale=0.7]{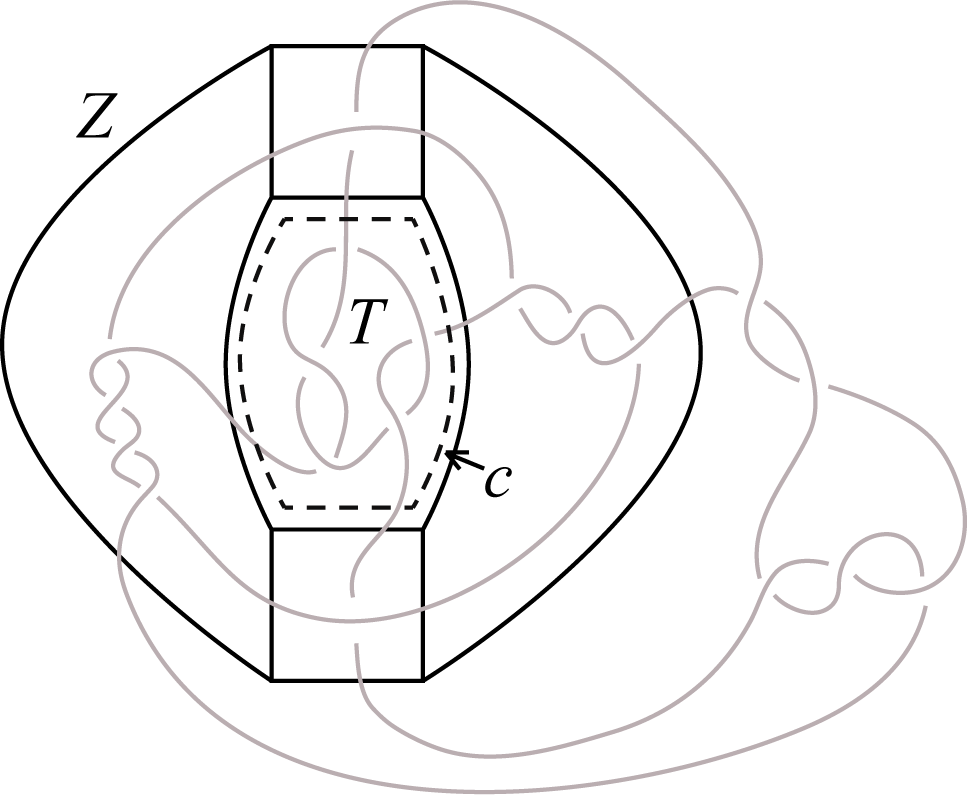}
\caption{}
\label{Link2}
\end{subfigure}
\caption{Two $PSPS$ curves in a link diagram.}
\end{figure}

There are two possibilities:

\
1. Each of these sub-tangles is a rational tangle.

\
2.  At least one of the sub-tangles is prime.

\noindent  We examine each sub-tangle.  By Remark \ref{RationalTangle}, we can determine whether each of the sub-tangles is rational just by looking at the link diagram.  If possibility 1 holds, then $D$ is a standard  diagram of an alternating pseudo-Montesinos link and we are done.   Otherwise at least one of the sub-tangles is prime.

 Assume at least one of the sub-tangles $T$ is prime as in Fig.\ref{Link1}.  We will show that the complementary tangle $T^c$ is also prime.  This proves that the curve $c$ (which is a $PPPP$ curve) that we see on the diagram $D$ describes a decomposition of $L$ into two prime tangles.

Capping the curve $c$ with disks on both sides of the projection sphere forms a 4-punctured 2-sphere $W$.  We call $W$ the 2-sphere {\it associated to $c$}.  $W$ splits the knot into the two 2-tangles, $T$ and $T^c$.  Since $W$ and $Z$ can be assumed disjoint,  one of the two tangles defined by $Z$ therefore also lies completely inside $T^c$.  Assume the tangle $Q$ lies completely inside $T^c$.

We claim that $T^c$ cannot be a rational tangle. Assume to the contrary that  $T^c$ is rational. Then the two arcs in $T^c$  are parallel through disks $E_1$ and $E_2$ to the 4-punctured sphere $W$.    We consider how these disks intersect the 4-punctured sphere $Z$. There are two cases; $Z$ intersects both arcs of the tangle $T^c$ (in two points each), or $Z$ intersects one arc of $T^c$ (in four points) and is disjoint from the other.

In either case,  we use the fact that $Z$ is incompressible in the complement of the knot to remove simple closed curves of intersection between $E_1\cup{E_2}$ and $Z$.   An outermost (in $E_1\cup{E_2}$) arc of intersection with $Z$ can then be doubled to yield a compressing disk for $Z$ in the complement of the link.  Since $Z$ is incompressible in the link complement, this is a contradiction.
Hence $T^c$ is not rational.

A similar argument shows $T^c$ cannot contain any essential twice-punctured sphere, so $T^c$ is a prime tangle, as required. \end{proof}

Note that every standard diagram of an alternating pseudo-Montesinos link gives rise to two $PSPS$ curves, as in Fig.\ref{Link1}. By Theorem 2 of \cite{Menasco1985} the resulting sphere is essential, i.e. every standard diagram of an alternating pseudo-Montesinos link has a hidden Conway sphere. We now consider the possibility of a visible Conway sphere.

Suppose $D$ is a standard diagram of an alternating pseudo-Montesinos link $L$. Let $c$ be a $PPPP$ curve in $D$, and let $W$ be the 4-punctured 2-sphere associated to $c$.     A {\it strong isotopy} of $W$rel$D$ is an isotopy of $W$ that induces a planar isotopy of $c$.     We say that $c$ is a {\it flyping curve} for $D$ if there exists another $PPPP$ curve $b$ disjoint from $c$ and the annulus between $c$ and $b$ contains a single crossing of the diagram. Then there is a flype of the diagram, which changes which strands have the single crossing between $c$ and $b$, and which turns the tangle inside $c$ upside down. We call this {\it flyping the diagram along $c$};  this preserves the alternating property (see \cite{MT}, Fig.1 for more details; our ``flyping curve'' is the boundary of the tangle $S_A$). Notice that if the disk bounded by $c$ which is also contained in $b$ only contains 0 or 1 crossing, flyping along $c$ leaves the diagram unchanged. The following results use the labeling from Fig.\ref{pseudo}; in particular, $Q_i$ is the closed punctured curve bounding the tangle $T_i$ in $D$.

\begin{proposition}\label{PropTwo} Suppose $D$ is a standard diagram of an alternating pseudo-Montesinos link $L$. Let $c$ be a $PPPP$ curve in $D$ and let $W$ be the 4-punctured 2-sphere associated to $c$.    Then there is a strong isotopy of $W$rel$D$ such that $c$ is isotoped to a $PPPP$ curve $c'$ which is either parallel to  $Q_i$ for some $i$ or which lies completely inside  $T_i$ for some $i$, or $c’$ bounds a disk which is disjoint from all the $T_i$’s and which contains zero or one crossing.    In the last case note that flyping along $c'$ does not change the diagram.\end{proposition}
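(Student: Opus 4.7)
The plan is to first minimize $N := |c \cap \bigcup_{i=1}^4 Q_i|$ over all $PPPP$ curves strongly isotopic to $c$ rel $D$, and then analyze the resulting curve $c'$ according to the value of $N$. A strong isotopy induces a planar isotopy of $c$ in the projection sphere $Q$, so this minimization is well posed.

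Suppose first that $N = 0$ after minimization. Then $c'$ lies in a single face of the planar arrangement $\bigcup_i Q_i \subset Q$. The faces are the four open sub-tangle regions $\mathrm{int}(T_i)$ and the exterior $E = Q \setminus \bigcup_i T_i$. If $c' \subset T_i$, then either $c'$ is isotopic in $T_i$ to $Q_i$ (case (i)) or it lies strictly inside $T_i$ enclosing a proper sub-tangle (case (ii)). If instead $c' \subset E$, then the disk of $Q$ bounded by $c'$ on the side lying in $E$ meets the link in exactly four points. Direct inspection of the exterior pattern in a standard pseudo-Montesinos diagram (Fig.\ref{pseudo}), which contains only the four re-routed strands between $T_1$--$T_3$ and $T_2$--$T_4$ together with their mutual crossings, shows that any such $PPPP$ disk either is parallel to some $Q_i$ or encloses exactly one exterior crossing, yielding case (iii).

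When $N > 0$ after minimization, I would seek a contradiction with minimality. Pick an outermost arc $\alpha$ of $c$ inside some $T_i$: an arc with endpoints on $Q_i$ cobounding a disk $\Delta \subset T_i$ with a subarc $\beta \subset Q_i$, such that $\mathrm{int}(\Delta) \cap c = \emptyset$. Since $T_i$ is rational, Remark \ref{RationalTangle} gives a concrete annular description of how the link sits inside $T_i$; combined with the fact that $c$ meets the link in only four points and with Lemma \ref{Lemma}(\ref{SaddleArc})--(\ref{ArcTwice}) applied to $W$, this sharply restricts how $\alpha$ can meet the link in $\Delta$. In each allowable configuration, one can push $\alpha$ across $\Delta$ through $Q_i$ by a strong isotopy (sliding $W$ through a product neighborhood in a small regular neighborhood of $\Delta$), strictly reducing $N$ and contradicting minimality. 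The main obstacle will be executing this push-across uniformly across the different configurations while preserving the $PPPP$ property of $c'$; one must carefully track the four punctures of $c$ and exploit the reduced alternating structure of $D$ to rule out configurations where a naive push would create new link intersections or alter the puncture count.

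Finally, the assertion that $c'$ in case (iii) is not a flyping curve is a local check. A flyping curve would require a disjoint $PPPP$ curve $b$ such that the annulus between $c'$ and $b$ contains a single crossing. The only crossing already enclosed by $c'$ is a single exterior crossing, so any candidate $b$ on the exterior side of $c'$ would have to live among the re-routed strands and the four rational sub-tangles. In a standard pseudo-Montesinos diagram, each $T_i$ contains at least one crossing, and the exterior strands and crossings are arranged so that no single exterior crossing is bracketed in this way by two nested $PPPP$ curves with only that one crossing between them. Therefore no such $b$ exists, and $c'$ is not a flyping curve, completing the proof.
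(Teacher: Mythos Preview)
Your overall plan---minimize $N=|c\cap\bigcup Q_i|$ by strong isotopy and then analyze cases---is exactly the paper's strategy, and your $N=0$ discussion is close in spirit though vaguer than the paper's (the paper handles $c'\subset E$ by a clean four-way split according to how $c'$ separates the $T_i$, ruling out the nontrivial separations by counting that $c'$ would then be forced to meet more than four edges of $D$).

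The genuine gap is in your treatment of $N>0$. Two problems. First, you invoke Lemma~\ref{Lemma}(\ref{SaddleArc})--(\ref{ArcTwice}) ``applied to $W$,'' but that lemma is stated for an \emph{essential} Conway sphere placed in complexity-minimizing standard position; here $W$ is just the sphere associated to an arbitrary $PPPP$ curve and need not be essential, so the lemma does not apply. Second, and more seriously, you try to push an outermost interior arc $\alpha\subset T_i$ \emph{out} of the rational tangle. You appeal to the annular description of rational tangles, but you never explain why an arc meeting $D$ twice inside $T_i$ can always be expelled by a planar isotopy of $c$; the rational tangle can be arbitrarily long and there is no evident mechanism for this push that preserves the four marked points of $c\cap D$.

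The paper avoids this difficulty by going the other way. It first observes that, after minimizing $N$, every subarc of $c$ cut off by $Q_1$ must meet $D$, and that any interior subarc meeting $D$ only once can be slid out of $T_1$ (the single puncture slides along its strand through $Q_1$). Since $|c\cap D|=4$, this forces exactly two subarcs $\beta_1,\beta_2$, one inside and one outside $T_1$, each meeting $D$ twice. The same counting shows $\beta_1$ cannot enter any other $T_j$. Thus $\beta_1$ is an arc in the \emph{exterior} region $E$---which, unlike the $T_i$, has a fixed, very simple strand pattern---with endpoints on $Q_1$ and two punctures. By direct inspection of Figure~\ref{pseudo} such an arc can always be strongly isotoped \emph{into} $T_1$, reducing $N$ and giving the desired contradiction. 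The key idea you are missing is this counting reduction to a single exterior arc and then pushing it inward, rather than trying to expel interior arcs from a potentially complicated rational tangle.
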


Assuming this proposition, we can prove:

{\begin{corollary}\label{CorOne} Suppose $D$ is a standard diagram of an alternating pseudo-Montesinos link $L$. Let $c$ be a $PPPP$ curve in $D$ and let $W$ be the 4-punctured 2-sphere associated to $c$.   Then $W$ is not a Conway sphere for $L$.  If $c$ is a flyping curve for $D$,  flyping along $c$ yields another standard alternating pseudo-Montesinos diagram for $L$.   \end{corollary}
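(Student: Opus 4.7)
The plan is to apply Proposition~\ref{PropTwo} to $c$, producing a strong isotopy of $W\,\mathrm{rel}\,D$ that carries $c$ to a $PPPP$ curve $c'$ in one of three standard positions: (a) parallel to some $Q_i$, (b) lying entirely inside some $T_i$, or (c) bounding a disk containing a single crossing exterior to $\cup_i T_i$. Let $W'$ be the 4-punctured sphere associated to $c'$. Since a strong isotopy is an ambient isotopy of $W$ in the link complement, $W'$ is essential if and only if $W$ is, and flyping along $c$ coincides with flyping along $c'$, so I may work with $c'$ throughout.

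To show $W$ is not a Conway sphere, I verify in each of (a), (b), (c) that the region enclosed by $c'$ is a rational 2-tangle, and then produce a compressing disk. In case (a), the interior of $c'$ is planar-isotopic to $T_i$, which is rational by the definition of a standard pseudo-Montesinos diagram. In case (b), by Thistlethwaite's description of alternating rational tangle diagrams (Remark~\ref{RationalTangle}) as built from nested annuli each containing a single crossing, any $PPPP$ curve in the interior of $T_i$ cuts off a rational sub-tangle. Case (c) gives a ball containing a single crossing, which is the simplest rational tangle. For any rational 2-tangle, a self-homeomorphism of the ball (rel boundary) carries it to a trivial tangle of two parallel arcs, inside which the vertical disk separating the two strands pulls back to a properly embedded disk whose boundary is an essential simple closed curve on $W'$. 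This is a compressing disk for $W'$ in $S^3 - L$, so $W'$, and hence $W$, is compressible. Therefore $W$ is not essential, and thus not a Conway sphere.

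For the flyping claim, suppose $c$ is a flyping curve for $D$. After strong isotopy (which preserves the flyping condition, since the curve $b$ and the single crossing between them can be carried along), I may assume $c = c'$. Proposition~\ref{PropTwo} explicitly rules out case (c), since such a $c'$ is not a flyping curve. In case (a), the tangle inside $c$ is, up to planar isotopy, the rational tangle $T_i$, and the flype rotates $T_i$ by $\pi$ while relocating the single crossing in the annulus between $c$ and $b$. Since a rational tangle rotated by $\pi$ is again rational, each sub-tangle of the flyped diagram remains rational. In case (b) the flype is entirely internal to $T_i$, so $T_i$ is transformed into another rational tangle while the other $T_j$ are unchanged. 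In both cases, the global pseudo-Montesinos connectivity pattern is preserved. The diagram remains alternating and reduced by Tait's flyping theorem, and the standardness condition that each sub-tangle contains at least one crossing is easily verified in both cases, yielding a standard alternating pseudo-Montesinos diagram for $L$.

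The main technical point is case (b) of the first claim, which requires extracting the rational substructure cut off by an arbitrary $PPPP$ curve interior to an alternating rational tangle from Thistlethwaite's nested-annulus picture. For the flyping claim the subtlety is in case (a): one must confirm that the crossing moved by the flype stays compatible with the pseudo-Montesinos template and does not destroy the rational status of any $T_j$ or the standardness of the diagram.
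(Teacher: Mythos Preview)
Your proposal is correct and follows essentially the same approach as the paper: apply Proposition~\ref{PropTwo}, observe that in each of the three outcomes $c'$ bounds a rational tangle (hence $W$ is compressible and not a Conway sphere), and note that any flype through such a curve preserves the rationality of the relevant $T_i$ and therefore the pseudo-Montesinos structure. The paper's argument is terser---it simply asserts ``in all cases $c$ bounds a rational tangle'' and ``flyping preserves the rationality of $T_i$''---while you spell out the compressing disk and the flyping case analysis; but the content is the same.

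One minor remark on your case (b): the cleanest justification that a $PPPP$ curve interior to a rational $T_i$ cuts off a rational sub-tangle is topological rather than diagrammatic. Since $(B_i,T_i)$ is pairwise homeomorphic to a ball with two unknotted, unlinked arcs, any sub-ball meeting the arcs in four points inherits arcs that are again unknotted and unlinked (they lie on a disk), hence rational. This avoids having to chase an arbitrary $PPPP$ curve through Thistlethwaite's nested-annulus picture.
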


\begin{proof}
By Proposition \ref{PropTwo}, there is strong isotopy of $W$rel$D$ such that $c$ is isotoped to a $PPPP$ curve $c'$ which is parallel to  $Q_i$ for some $i$, or which lies completely inside  $T_i$ for some $i$, or bounds a disk containing a single crossing  completely exterior to $\cup{T_i}$.    In all cases, $c$ bounds a rational tangle.  If $c$ is a flyping curve for $D$,  flyping along $c$ preserves the rationality of $T_i$, hence preserves the standard alternating pseudo-Montesinos diagram structure of $D$. \end{proof}

\begin{corollary}\label{CorTwo} Suppose $D$ is a standard diagram of an alternating pseudo-Montesinos link $L$.   Then no reduced alternating diagram for $L$ contains a $PPPP$ curve $c$ corresponding to a Conway sphere.      \end{corollary}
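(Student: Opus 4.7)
The plan is to reduce the statement to Corollary~\ref{CorOne} by invoking the Tait flyping theorem of Menasco--Thistlethwaite~\cite{MT}: any two reduced alternating diagrams of the same link are related by a finite sequence of flypes. So I would let $D'$ be an arbitrary reduced alternating diagram of $L$ and fix a sequence $D = D_0, D_1, \dots, D_n = D'$ of reduced alternating diagrams in which each $D_{k+1}$ is obtained from $D_k$ by a single flype.

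I would then prove by induction on $k$ that every $D_k$ is itself a standard diagram of an alternating pseudo-Montesinos link for $L$. The base case $k=0$ is the hypothesis on $D$. For the inductive step, the flype taking $D_k$ to $D_{k+1}$ is performed along some flyping curve $c$ in $D_k$, exactly in the sense defined just before Proposition~\ref{PropTwo}, so the second assertion of Corollary~\ref{CorOne} applied to $D_k$ ensures that $D_{k+1}$ is again a standard diagram of an alternating pseudo-Montesinos link.

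Once the induction is complete, $D'$ is itself a standard alternating pseudo-Montesinos diagram. Applying the first assertion of Corollary~\ref{CorOne} directly to $D'$, for every $PPPP$ curve $c$ in $D'$ the associated $4$-punctured $2$-sphere $W$ fails to be a Conway sphere for $L$, which is exactly the desired conclusion.

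The main obstacle I anticipate is essentially bookkeeping: one must verify that the flypes supplied by the Menasco--Thistlethwaite theorem are indeed flypes along flyping curves in the stricter sense used in this section (a $PPPP$ curve $c$ together with a companion $PPPP$ curve $b$ cobounding an annulus containing a single crossing), so that Corollary~\ref{CorOne} may legitimately be invoked at each step. This is a matching-of-definitions check, but it is the only subtlety; once it is settled, the argument runs mechanically, and in particular genuinely requires transporting the pseudo-Montesinos structure along the entire sequence of flypes before applying Corollary~\ref{CorOne} to $D'$ at the end.
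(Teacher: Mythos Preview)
Your proposal is correct and is essentially the same argument as the paper's: the paper's proof is the single sentence ``Since any two reduced alternating diagrams are related by a sequence of flypes \cite{MT}, Corollary~\ref{CorTwo} follows from Corollary~\ref{CorOne},'' and your write-up simply unpacks this into the explicit induction along the flype sequence. The definitional check you flag (that the flypes produced by \cite{MT} are flypes along flyping curves in the sense of this section) is indeed the only thing to verify, and the paper takes it as read.
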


\begin{proof}
Since any two reduced alternating diagrams are related by a sequence of flypes \cite{MT}, Corollary \ref{CorTwo} follows from Corollary \ref{CorOne}. \end{proof}

Theorem \ref{Theorem} follows immediately from Propositions \ref{PropOne} and Corollary \ref{CorTwo}.

The remainder of this section is devoted to the proof of Proposition \ref{PropTwo}.

\begin{proof}
Minimize the number of points of intersection $c\cap{\bigcup{Q_i}}$ up to strong isotopy of  $W$rel$D$.

Case 1: $c\cap{\bigcup{Q_i}}=\emptyset$.

\begin{itemize}

{\item Subcase a: $c\subset{T_i}$ for some $i$; then we are done.}\\
{\item Subcase b: $c\cap{\bigcup{T_i}}=\emptyset$.}

\end{itemize}
Proof for Subcase b:\\
Up to relabeling, one of the following must hold:\\

\noindent i. $c$ does not separate the $T_i$'s.\\
ii.  $c$ separates $T_1$ from $T_2, T_3,T_4$.\\
iii. $c$ separates the $T_i$'s in pairs.\\

i.   Then $c$ either contains a single crossing or it bounds a 2-stranded tangle with no crossings. In both cases, flyping over $c$ leaves $D$ unchanged. 

ii. Then $c$ is parallel to $Q_1$ and we are done.

iii.  In both cases ($c$ separates $T_1$ and $T_2$ from the rest, or $T_1$ and $T_3$ from the rest), examination of Fig.\ref{pseudo} shows $c$ must intersect $D$ in at least 6 points, a contradiction.

Case 2: $c\cap{\bigcup{Q_i}}\neq\emptyset$.    Assume $c$ intersects $Q_1$.

The points of intersection between $c$ and $Q_1$ divide $c$ into subarcs $\beta_1, \beta_2,....,\beta_{2m}$ with $\beta_{2j}\subset{T_1}$.

Since we have minimized the number of points of intersection in $c\cap{\bigcup{Q_i}}$ up to strong isotopy of $W$, $\beta_i\cap{D}\neq{\emptyset}$ for each $i$. Therefore there are at most four such subarcs of $c$.

If $\beta_j\subset{T_1}$ only intersects $D$ in a single point for some $j$, we can (strongly) isotop $\beta_j$ out of $T_1$, contradicting minimality. Therefore there are at most two such subarcs of $c$, $\beta_1$ and $\beta_2$, with $\beta_2\subset{T_1}$, and each subarc intersects $D$ in exactly two points.

The endpoints of $\beta_2$  must separate the intersection points of $D$ with $Q_1$ into two pairs.
 We now apply the above arguments to $\beta_1$, which shares these two endpoints with $\beta_2$.  It is useful to refer back to Fig.\ref{pseudo}, where the endpoints of  $\beta_2$ (and $\beta_1$) are marked on $Q_1$, as either the pair of black points or the pair of blue points.    We note that $\beta_1$ cannot intersect any other $T_i$, since any subarc of $c$ contained in a $T_i $ must intersect $D$ in at least two points, and the sections of $\beta_1$ disjoint from all $T_i$'s would also have to intersect $D$ at least once, a contradiction.  Hence $\beta_1$ must be a subarc disjoint from $T_2\cup T_3\cup T_4$, with both endpoints on  $Q_1$, intersecting $L$ in exactly two points.   By inspection we see that $\beta_1$ can be strongly isotoped into $T_1$, contradicting minimality.    Hence $c\cap{\bigcup{Q_i}}=\emptyset$. \end{proof}

 \section{Tangle decompositions of alternating braids}\label{Braids}

Let $L$ be an alternating, prime, non-split closed $n$-braid with a reduced alternating braid diagram $D$, $n\geq 3$.
For a diagram $D$, see Fig.\ref{BraidDiagram}, where every square represents either a twist or a 2-tangle with no crossings. By a \textit{twist} we mean either a single crossing, or a connected sequence of bigon regions of $D$ that is not a part of another such sequence. The point $x$ in the figure represents the braid axis. %Let us call an arc of $D$ from a crossing to the next crossing an \textit{edge} of $D$.

 \begin{figure}[ht]
 \centering
 \begin{subfigure}[b]{0.4 \textwidth}
 \centering
 \includegraphics[scale=0.48]{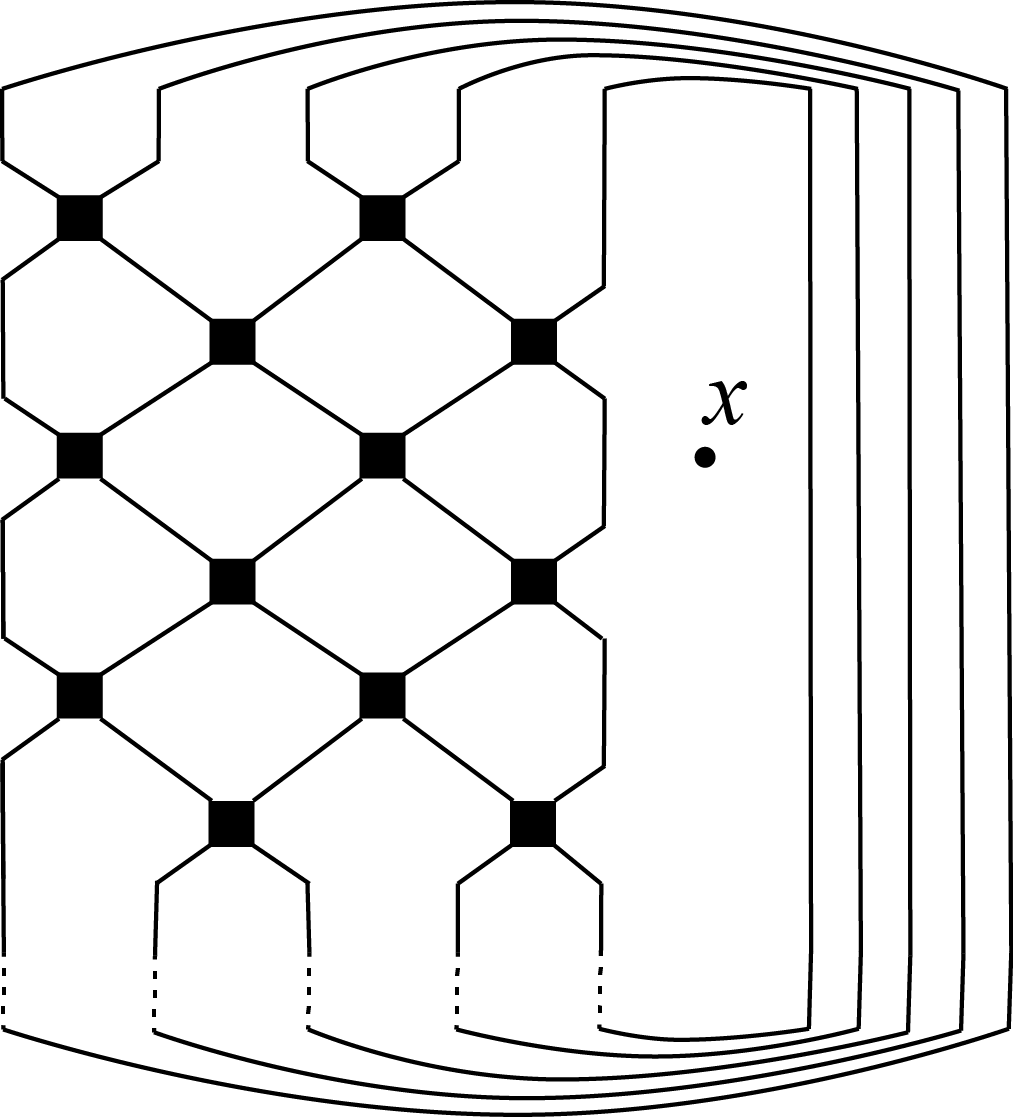}
 \caption{}
 \label{BraidDiagram}
 \end{subfigure}
 \begin{subfigure}[b]{0.4 \textwidth}
\centering
 \includegraphics[scale=0.49]{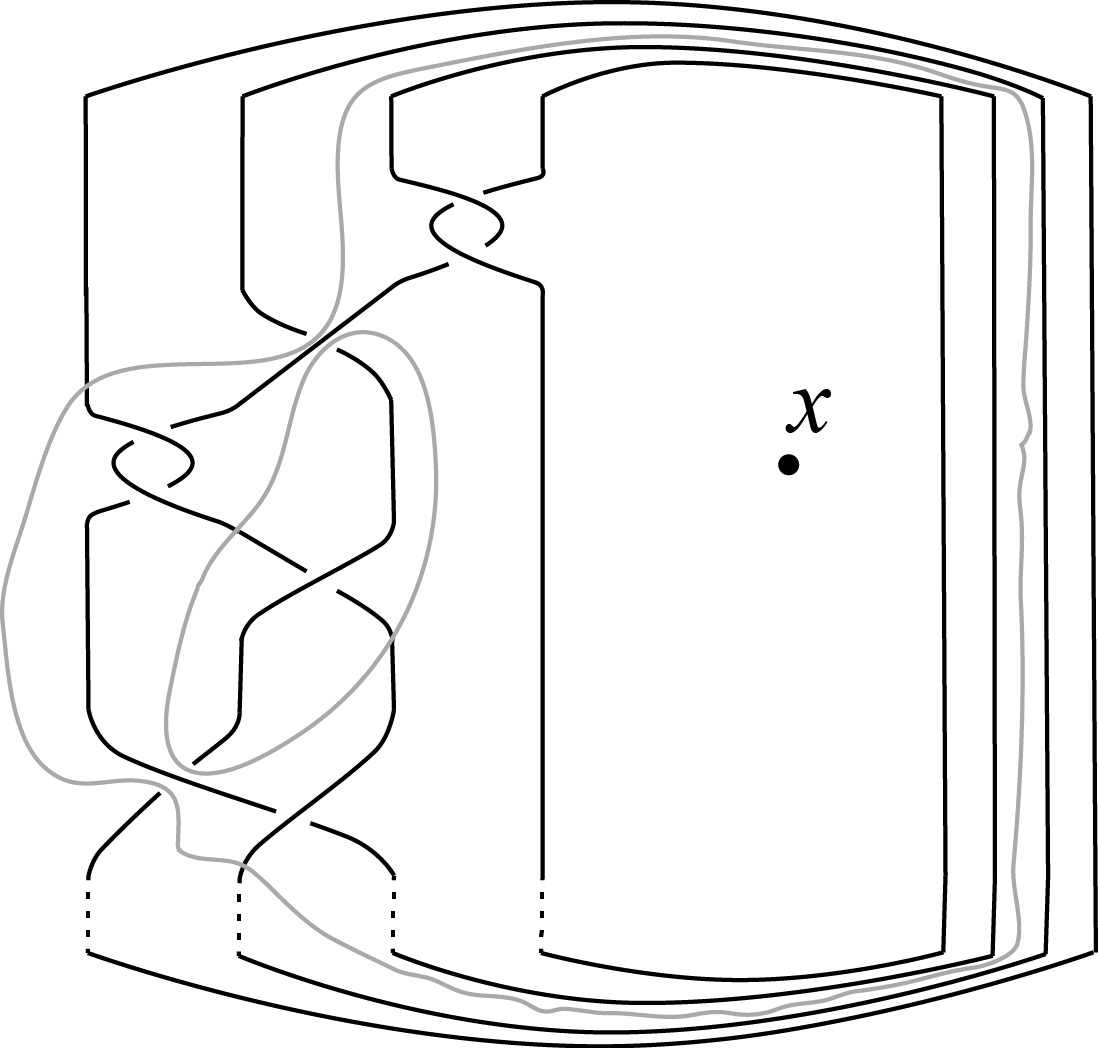}
 \caption{}
 \label{BraidExample}
 \end{subfigure}
 \caption{Diagrams of alternating braids}
 \end{figure}

 As before, let $D$ lie on the projection plane $Q$. A curve $C$ on $Q$ will be called \textit{special} with respect to $D$ if the following holds.
 \begin{enumerate}
\item $C$ intersects $D$ transversally in exactly four points. Denote them $x_1, x_2, x_3, x_4$.

\item $C$ intersects every edge of $D$ at most once.

\item $C$ is \textit{monotone}, \textit{i.e.} it can be isotoped (where the intersection points of $C$ with the link may slide along a link strand until they reach a crossing, but not further) so that a ray from $x$ always intersects $C$ in a single point. Fig.\ref{Curve1} shows an example of a monotone curve, and Fig.\ref{Curve2} shows an example of a curve that is not monotone. If there is a choice whether put $x$ inside or outside of $C$, while $x$ stays in the same region of $D$, we always assume $x$ is outside of $C$. This is illustrated in Fig.\ref{Curve3}, where the curve is not monotone.
 \end{enumerate}

Note that the last condition above also implies that, up to isotopy, $C$ winds exactly once around $x$.

 \begin{figure}[h]
 % \centering
  \begin{subfigure}[b]{0.325 \textwidth}
  %\centering
  \includegraphics[scale=0.41]{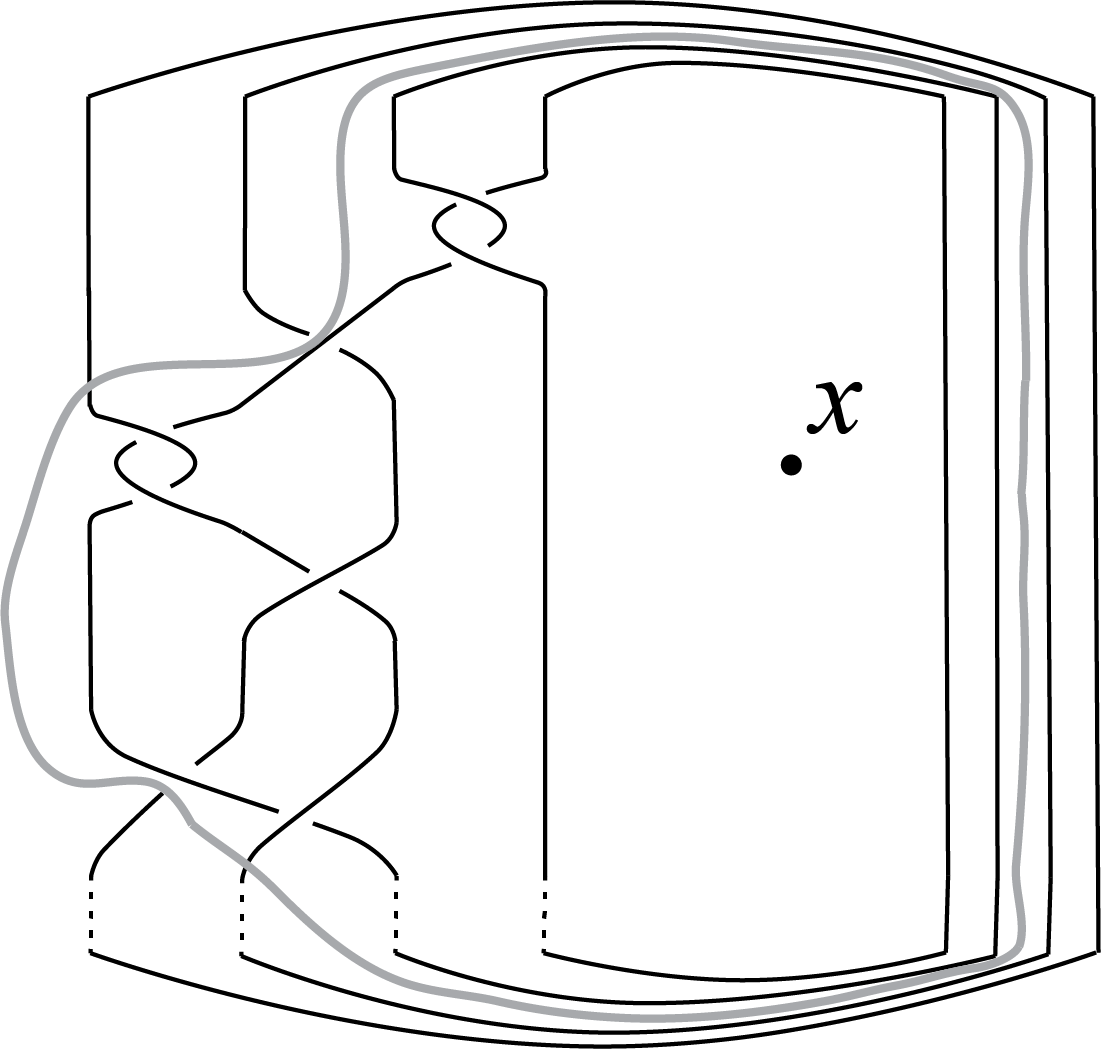}
  \caption{}
  \label{Curve1}
  \end{subfigure}
  \begin{subfigure}[b]{0.325 \textwidth}
  \centering
  \includegraphics[scale=0.41]{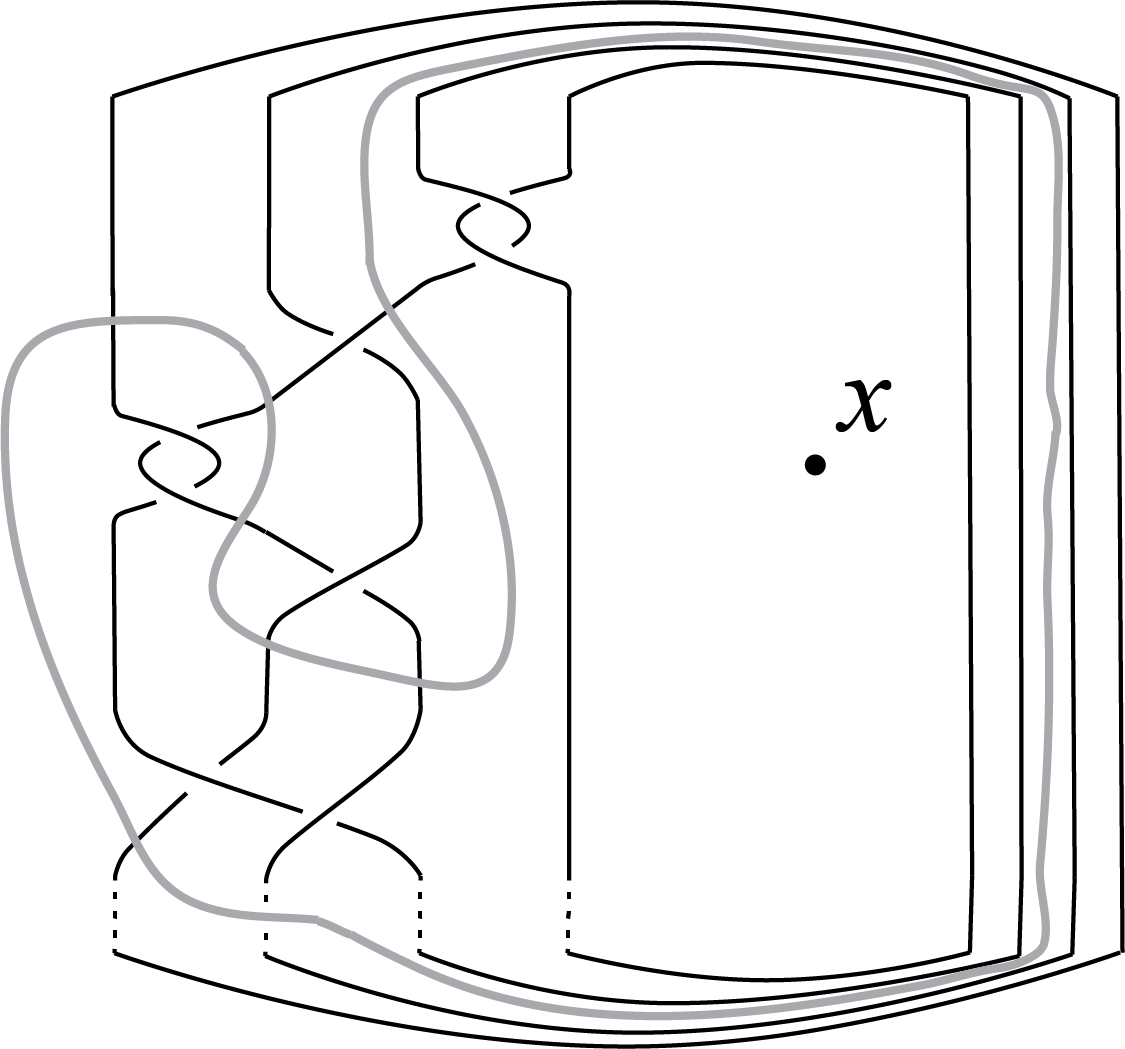}
  \caption{}
  \label{Curve2}
  \end{subfigure}
  \begin{subfigure}[b]{0.325 \textwidth}
  \centering
  \includegraphics[scale=0.41]{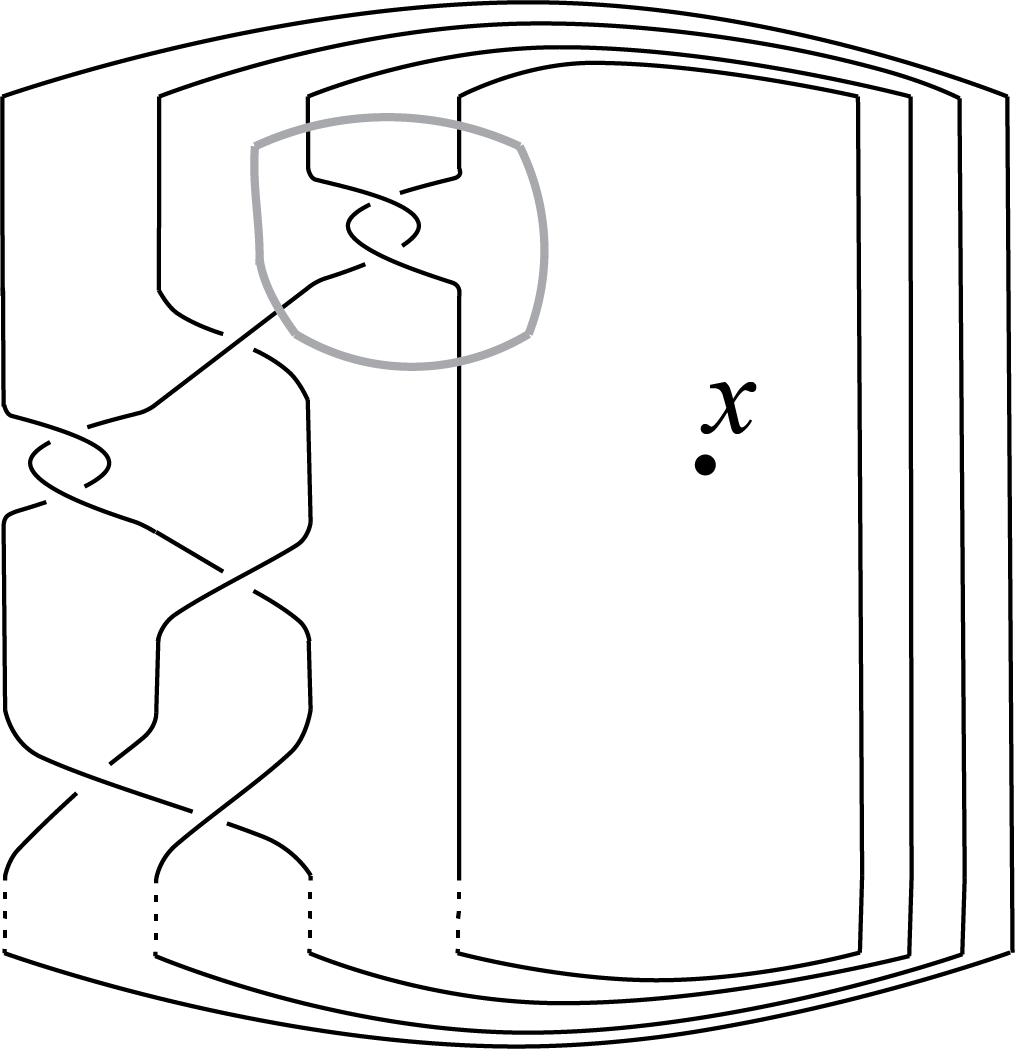}
  \caption{}
  \label{Curve3}
  \end{subfigure}
  \caption{In grey, a curve that is monotone (left) and two curves that are not (center, right).}
  \end{figure}

Recall that by Lemma \ref{Lemma} (\ref{GenusTwoWords}), a Conway sphere $F$ in standard position results in either one $PPPP$ or two $PSPS$ curves in $F\cap S^2_+$ (similarly, in $F\cap S^2_-$). Modify a $PSPS$ curve as follows. For every saddle it passes, push the curve from the saddle at a crossing into one of the edges adjacent to this crossing, as on Fig.\ref{PSPSmodified}. Call the new curve a \textit{modified $PSPS$ curve}. This modification does not correspond to an isotopy of $F$.

 \begin{theorem}\label{SpecialCurve}
 Suppose $D$ is a reduced alternating diagram of a prime alternating non-split $n$-braid $L$. Assume $S^3-L$ contains a Conway sphere $F$. Then either

 \begin{itemize}

 \item $D$ admits a special curve that is a $PPPP$ or modified $PSPS$ curve for $F$ or

 \item $D$ is  a diagram of the $3$-braid $\sigma_1\sigma_2^{-n_1}\sigma_1^{n_2}\sigma_2^{-1}\sigma_1^{n_3}\sigma_2^{-n_4}$, where $n_1, n_2, n_3, n_4$ are positive integers.\end{itemize}\end{theorem}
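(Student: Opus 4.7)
The plan is to invoke Theorem \ref{Theorem} and then separately upgrade each of its two conclusions to one of the alternatives stated above. Since $L$ is a prime non-split alternating link with reduced alternating diagram $D$ containing a Conway sphere $F$, Theorem \ref{Theorem} applies: either $F$ corresponds to a visible prime tangle decomposition of $L$ in $D$, realized by a $PPPP$ curve $c$ on $Q$, or $D$ is a standard diagram of an alternating pseudo-Montesinos link. I treat the two cases separately.

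\emph{Visible case.} I would show that $c$ can be isotoped (sliding the four intersection points along strands of $D$ when necessary) to a special curve. Condition (1) of the special-curve definition is automatic from $c$ being $PPPP$. For condition (2), I place $c$ in complexity-minimizing standard position and combine Lemma \ref{Lemma}(\ref{ArcTwice}) with an outermost-disk argument: if $c$ met an edge $e$ of $D$ twice, a sub-arc of $c$ and a sub-arc of $e$ would cobound a disk $\Delta \subset Q$ meeting $D$ only at crossings in its interior; an outermost such $\Delta$ would either contradict Lemma \ref{Lemma}(\ref{SaddleArc}) or permit a strong isotopy that reduces complexity, yielding a contradiction. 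For condition (3), I would minimize the intersection of $c$ with a generic ray $\rho$ from the braid axis $x$ over the allowed isotopies of $c$ (which permit the punctures to slide along strands up to crossings); innermost bigons between $c$ and $\rho$ can be eliminated one at a time by such isotopies, and that the resulting minimum equals one follows from the essentiality of $F$ combined with the braided structure of $D$, which prevents $c$ from bounding an axis-free disk in $Q$.

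\emph{Pseudo-Montesinos case.} Here $D$ is simultaneously a reduced alternating closed braid diagram and a standard pseudo-Montesinos diagram with four rational sub-tangles $T_1, T_2, T_3, T_4$. The crux is that every strand of $D$ must wind monotonically around the braid axis $x$. This monotonicity, combined with the alternating property and the fact that each $T_i$ is a reduced alternating rational tangle sub-diagram, forces each $T_i$ to be a single twist block and the four crossover strands of the pseudo-Montesinos construction to be realized by exactly two single braid generators joining appropriate twist blocks. A case analysis on the cyclic order of the $T_i$'s around $x$ and on the signs of the two crossover generators then excludes every braid index $n > 3$ and, after normalizing by conjugation and relabeling, produces the word $\sigma_1\sigma_2^{-n_1}\sigma_1^{n_2}\sigma_2^{-1}\sigma_1^{n_3}\sigma_2^{-n_4}$ with positive $n_i$. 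The principal obstacle is precisely this case analysis: showing that the pseudo-Montesinos crossover pattern is geometrically incompatible with more than three braid strands all winding monotonically around $x$, and that within $n = 3$ only the displayed cyclic word appears up to trivial reparametrizations.
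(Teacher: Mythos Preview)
Your route through Theorem \ref{Theorem} is genuinely different from the paper's, which never invokes Theorem \ref{Theorem} at all. The paper works directly with $F\cap S^2_+$: it treats the $PPPP$ and $PSPS$ cases on an equal footing by replacing each $PSPS$ curve with a \emph{modified} $PSPS$ curve (pushing it off the two saddles onto adjacent edges), and then proves a single combinatorial claim---via an exhaustive enumeration of how a four-point curve can thread through the twist-block structure of a closed-braid diagram---that any such curve is either special or bounds a single twist. The $3$-braid word then falls out of the situation where \emph{both} modified $PSPS$ curves bound a single twist.

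There is a genuine gap in your reduction. Theorem \ref{Theorem} does not assert that $F$ itself is visible; it only asserts that \emph{some} prime tangle decomposition is visible when $D$ is not pseudo-Montesinos. In the proof of Proposition \ref{PropOne}, when $F$ is in $PSPS$ position and one of the four sub-tangles is prime, the visible sphere produced is a new sphere $W$ (associated to the boundary of that sub-tangle), not $F$. Your ``visible case'' would therefore yield a special curve corresponding to $W$, not to $F$, whereas the theorem demands a special curve corresponding to $F$. This case---$F$ hidden, $D$ not pseudo-Montesinos---is exactly what the paper's modified-$PSPS$ device is designed to handle, and your dichotomy skips it entirely.

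Separately, your argument for monotonicity (condition (3)) is not yet a proof. Eliminating innermost bigons between $c$ and a ray $\rho$ requires sliding $P$-points along strands, but these slides are blocked at crossings; you have not shown the process terminates with $|c\cap\rho|=1$, nor does essentiality of $F$ by itself rule out a non-monotone $PPPP$ curve with prime tangles on both sides. The paper sidesteps this by its pattern enumeration rather than by an isotopy argument. Likewise, in your pseudo-Montesinos case the assertion that the four crossover strands become exactly two single generators and that $n>3$ is excluded is precisely the content that the paper extracts from the same enumeration; as written, your case analysis is only a plan.
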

  \begin{proof}  Note that for $n=1$, we obtain an unknot that contains no essential surfaces. For $n=2$, we obtain a $(2, n)$-torus link, where an existence of a $PSPS$ curve contradicts Lemma \ref{Lemma} (\ref{SaddleArc}), and a $PPPP$ curve bounds a rational tangle. Hence, there are no Conway spheres, and the theorem holds vacuously. We therefore need to prove theorem for $n\geq 3$.

 We claim that for $F$, any $PPPP$ or modified $PSPS$ curve $C_1$ coming from $F$ either is special or bounds a 2-tangle whose diagram contains exactly one twist.

 The curve $C_1$ cuts $D$ into two 2-tangles, and travels around $x$ not more than once, since it does not have self-intersections. In addition, $C_1$ cannot intersect an edge of $D$ more than once by Lemma \ref{Lemma} (\ref{SaddleArc}, \ref{ArcTwice}).

 If a $PSPS$ curve enters a bigon through a saddle at a crossing (with $S$), it must exit the bigon through an edge (with $P$). But this contradicts Lemma \ref{Lemma} (\ref{SaddleArc}). Hence, if $C_1$ is a modified $PSPS$ curve, it cannot intersect a bigon. If $C_1$ is an actual $PPPP$ curve, it can be isotoped so that it does not cross any bigons.

 Therefore, we can depict $C_1$ on the diagram from Fig.\ref{BraidDiagram} so that if it passes through a black square, then the square represents a tangle with no crossings, and $C_1$ does not intersect $L$ in that square. The intersections of $C_1$ with $L$ can be grouped in consecutive pairs (a pair of intersections corresponds to $C_1$ entering/exiting a region of $D$), each pair arranged on a vertical, horizontal or diagonal line through a region of $D$. There are four intersections of $C_1$ with $D$, which can be denoted by $x_1, x_2, x_3, x_4$. $C_1$ is a closed curve, and once a region where it starts is chosen, the fourth intersection, $x_4$, must allow the curve to return to the same region.

 Fig.\ref{Intersections} shows all possible patterns once the starting region is chosen, and $D$ is placed on $S^2$. In the figure, $C_1$ is depicted in gray, and dotted lines mean that more twisting may occur there. Between two consecutive points of intersection, $x_i$ and $x_{i+1}$ (up to cyclic order), the curve may pass through some black squares that have no crossings in them, without intersecting the link. To see that these are all possible patterns, note that $C_1$ must bound at least one twist, but is allowed only four intersections with $L$. Call the twist $t$: it is represented by a black square in the figure. Hence, a part of $C_1$ is necessarily a vertical segment between intersections with two strands of $L$ coming out of $t$. Suppose the intersections are on the left of $t$, and denote them by $x_2, x_3$ (for another side, the argument is similar). Then $C_1$ must close up in some way on the right from $t$. Since there are just four intersections of $L$ with $C_1$, and $t$ already has four strands, $C_1$ cannot go proceed to the left before $x_2$ and after $x_3$. Fig.\ref{Intersections} then demonstrates all ways for $C_1$ to close up, up to a symmetry/reflection, where the fact that other segments of $C_1$ must be horizontal, vertical or diagonal, is used.

  \begin{figure}[h]
\centering
   \begin{subfigure}[b]{0.185 \textwidth}
 \centering
   \includegraphics[scale=0.66]{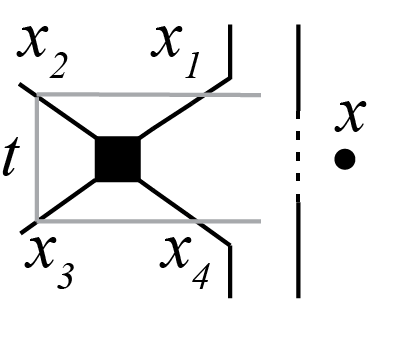}
   \caption{}
   \label{Intersections1}
   \end{subfigure}
   \begin{subfigure}[b]{0.185 \textwidth}
  \centering
     \includegraphics[scale=0.59]{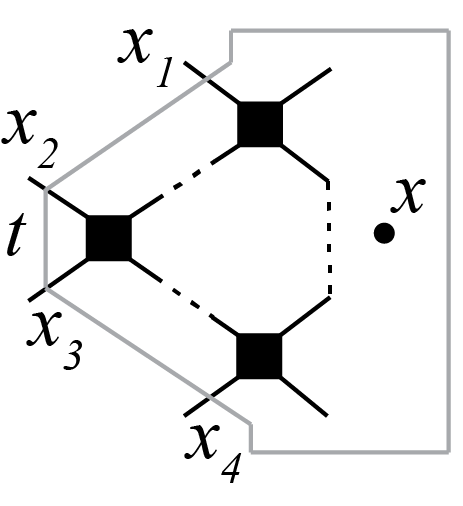}
     \caption{}
     \label{Intersections2}
     \end{subfigure}
     \begin{subfigure}[b]{0.185 \textwidth}
   \centering
       \includegraphics[scale=0.6]{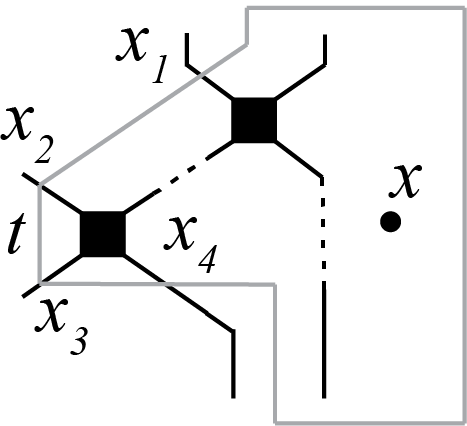}
       \caption{}
       \label{Intersections3}
       \end{subfigure}
   \begin{subfigure}[b]{0.185 \textwidth}
   \centering
   \includegraphics[scale=0.61]{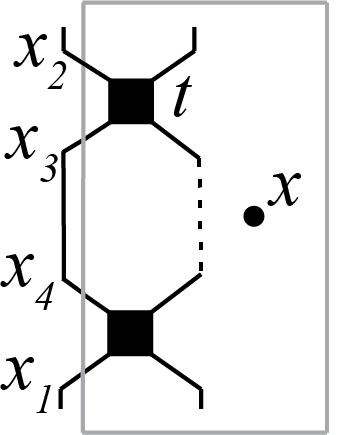}
   \caption{}
   \label{Intersections4}
   \end{subfigure}
   \begin{subfigure}[b]{0.185 \textwidth}
   \centering
   \includegraphics[scale=0.59]{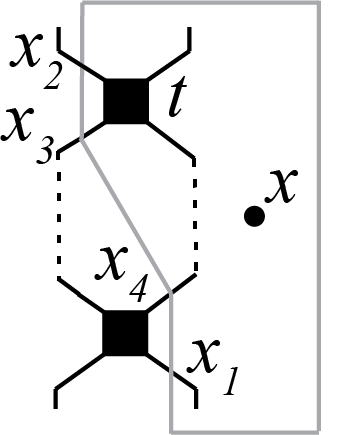}
   \caption{}
   \label{Intersections5}
   \end{subfigure}

   \caption{Patterns of intersection}
   \label{Intersections}
   \end{figure}

 Situation (1) includes either of the two possible ways of closing up $C_1$: either making a full circle around $x$, or through the shortest segment on the picture that connects its free ends. Fig.\ref{BraidExample} shows an actual example of a braid and two $PSPS$ curves that yield modified curves of types (1) and (3) from Fig.\ref{Intersections}.

 In each of the depicted situations, either $C_1$ bounds just one twist of $D$, or is monotone and therefore special. This concludes the proof of our claim.

 Now consider the $PPPP$ or $PSPS$ curve $C_1'$ represented by $C_1$. Assume $C_1$ is not special. Then by the claim it bounds just one twist.  If a $PPPP$ curve bounds just one twist, $F$ is compressible. Hence, only a $PSPS$ curve can bound a twist. The second $PSPS$ curve coming from $F$, denote it by $C_2'$, hits saddles at the same two crossings as $C_1'$. By the claim above, the modified $PSPS$ curve $C_2$ resulting from $C_2'$ either bounds a twist or is special itself.

 If $C_2$ is special, we are done. Otherwise each of $C_1$ and $C_2$ bounds a twist. Then there are six twists in the diagram: two enclosed by $C’_1$ and $C’_2$, two enclosed by the two modified $PSPS$ curves in $S^2_{-}$, and two one-crossing twists outside of all four of these modified $PSPS$ curves. This is depicted in Fig.\ref{6twists}, where $C_1, C_2$ are in grey color. Hence $L$ is a 3-braid. Each of the enclosed twists has just one crossing, since we isotoped $F$ so that a $PSPS$ curve does not go through a crossing of a bigon of $D$.  Therefore, $L$ is the braid $\sigma_1\sigma_2^{-n_1}\sigma_1^{n_2}\sigma_2^{-1}\sigma_1^{n_3}\sigma_2^{-n_4}$.  %Note that each of the two $PSPS$ curves in $F\cap S^2_-$ also bounds just one twist of this link, and is therefore not special.
 \end{proof}

  \begin{figure}[h]
\centering
   \includegraphics[scale=1.1]{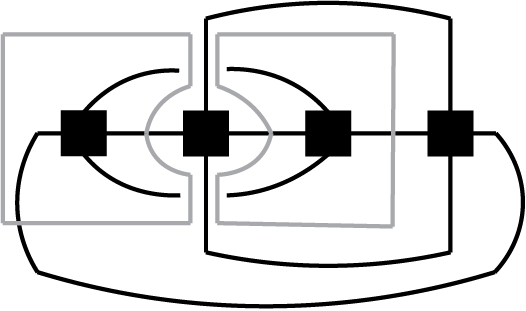}
   \caption{Two modified $PSPS$ curves are depicted in grey, and a link diagram is schematically depicted in black, with squares representing twists.}
   \label{6twists}
   \end{figure}

 \begin{example} Suppose that every black square on Fig.\ref{BraidDiagram} represents a tangle with at least one crossing. One can immediately see that such a diagram contains no special curves, and is not a diagram of a 3-braid. Therefore, by Theorem \ref{SpecialCurve}, $S^3-L$ contains no Conway sphere.
 \end{example}

\section{Acknowledgements}

We thank referee for the very careful reading of the paper. J. Hass acknowledges support from NSF grant DMS-1758107.   A. Thompson acknowledges support from NSF grant DMS-1664587. A. Tsvietkova acknowledges support from NSF DMS-1664425 (previously 1406588) and NSF DMS-2005496 grants, and by Insitute of Advanced Study (under DMS-1926686 grant).  All authors acknowledge support by Okinawa Institute of Science and Technology (OIST), Japan.

\newpage

Joel Hass \\
Department of Mathematics\\
University of California, Davis \\
One Shields Ave, Davis, CA 95616 \\
hass@math.ucdavis.edu

Abigail Thompson \\
Department of Mathematics\\
University of California, Davis \\
One Shields Ave, Davis, CA 95616 \\
thompson@math.ucdavis.edu

Anastasiia Tsvietkova\\
The Department of Mathematics and Computer Science\\
Rutgers University, Newark\\
101 Warren Street, Newark, NJ  07102\\
a.tsviet@rutgers.edu

\end{document}